\newcommand{\PP}{{\mathbb P}}
\newcommand{\NN}{{\mathsf N}}
\begin{document}
\title[
Return of $k$-bonacci random walks ] {Return of $k$-bonacci random
walks}

\author[N. Attia, C. Souissi]
{Najmeddine Attia, Chouha\"{i}d Souissi}  

\address{Najmeddine Attia, Chouha\"{i}d Souissi \newline
Department of Mathematics, Faculty of Sciences of Monastir,
University of Monastir, 5000 -Monastir, Tunisia}
\email{najmeddine.attia@gmail.com, chsouissi@yahoo.fr}

\subjclass[2000]{Primary 60G50; secondary 28A80}

\keywords{Random walks, $k$-bonacci sequence, Probability of
return, fractal dimension}

\begin{abstract}
In this work, the probability of return for random walks on
$\mathbb{Z}$, whose increment is given by the $k$-bonacci
sequence, is determined. Also, the Hausorff, packing and
box-counting dimensions of the set of these walks that return an
infinite number of times to the origin are given. As an
application, we study the return for tribonacci random walks to
the first term of the tribonacci sequence.
\end{abstract}

\maketitle \numberwithin{equation}{section}
\newtheorem{theorem}{Theorem}[section]
\newtheorem{lemma}[theorem]{Lemma}
\newtheorem{remark}[theorem]{Remark}
\newtheorem{example}[theorem]{Example}
\newtheorem{examples}[theorem]{Examples}
\newtheorem{definition}[theorem]{Definition}
\newtheorem{proposition}[theorem]{Proposition}
\newtheorem{corollary}[theorem]{Corollary}
\allowdisplaybreaks

\section{Introduction and main results}

The Fibonacci sequence, commonly denoted by $(f_n)_{n\geq0}$, is a
sequence of integer numbers such that each of them is the sum of
the two preceding ones, starting from $0$ and $1$, i.e.,
 $$
\begin{cases}
f_{0}=0,\quad f_{1}=1, & \\
f_{n}=f_{n-1}+f_{n-2}, \qquad for\quad n\geq 2.
\end{cases}
 $$

This sequence was first introduced by Leonardo Fibonacci and is
tightly connected to the golden ratio $\varphi =(1+\sqrt{5})/2
=1.61803398\ldots$. Since then, many researchers has been
interested to the study of the properties of this sequence and
their applications. One can cite, for example, \cite{Mak} where it
was proved that $(f_n)_{n\geq0}$ increases exponentially with $n$
at a rate given by $\varphi$. A more general case was treated in
\cite{Vis}, where the author considered the random Fibonacci
sequence $(t_n)_{n\geq0}$ defined by $t_1 =t_2=1$ and for $n>2$,
 $$
t_n=\pm t_{n-1} \pm t_{n-2},
 $$
where each $\pm$ sign is independent and either $+$ or $-$ with
probability $1/2$ . It is not even obvious that $|t_n|$ should
increase with $n$. Although, it was proved that, almost surely
$$
\lim_{n\to+\infty}\sqrt[n]{|t_n|} = 1,13198824\ldots
$$
Later, in \cite{Neu}, the author considered the Fibonacci random
walk and determined the probability of its return to the origin.
More precisely, he considered the random walk on $\mathbb{Z}$
whose increments are given by $(f_n)_{n\geq0}$, i.e.,
 $$
\hat F_n =\sum_{k=1}^n f_i w_i,
 $$
where $(w_i)_{i\geq1}$ is a sequence of independent, identically
distributed random variables with $\PP(w_1=\pm 1)=1/2$. \\

We denote by $\mathbb{N}$ the set of positive integers and
consider the space of infinite sequences $\mathcal{A} =\{-1,
1\}^\mathbb{N}$. We define, for an elementary event
$w\in\mathcal{A}$, the set
 $$
F(w)=\big\{n\geq1, \; \hat F_n(w)=0\big\}.
 $$
Denoting by $"\sharp B"$ the cardinality of a given set $B$, we
set,
 $$
R_i =\big\{ w\in\mathcal{A} \;|\;\sharp F(w)=i\big\},\qquad
i\in\mathbb{N}\cup\{0\}
 $$
and
 $$
\NN=\Big\{ w\in\mathcal{A} \;|\;\sharp F(w)=\infty\Big\}.
 $$
It is known from \cite{Neu} that the probability of $R_i$ is
$3/4^{i+1}$. In particular, $\PP(\NN)=0$. The idea of studying
such kind of problems comes from a classical result due to Polya
\cite{Pol}, who was interested to the simple random walk of length
$n\geq1$, on the integers, given by
 $$
S_n =\displaystyle\sum_{j=1}^n w_j.
 $$
This means that $S_n$ is seen as to be the position, after $n$
steps, of a walk on the integers of an individual, who is supposed
to start its motion at the origin of the lattice, i.e.,$
S_0=w_0=0.$ P\'olya \cite{Pol} showed this walk to be recurrent,
which means that it almost surely returns to the origin in a
finite number of steps. The reader can also see, for example,
\cite{Fel, Kes, MaW} for more discussions on this problem.\\

In this paper, we are interested, for a given integer $k\geq2$, to
the $k$-bonacci random walk given by $\hat F_n$, where we take in
consideration the $k$-bonacci sequence $(f_n)_{n\geq0}$ defined by
$f_0=0$,
\begin{equation} \label{fn}
f_n= \displaystyle \sum_{j=1}^{k}f_{n-j}, \qquad\text{for}\quad
n\geq k+1.
\end{equation}
and the $k$ initializing terms $(f_n)_{1\leq n\leq k}$ are
supposed to satisfy the following condition
 $$
\displaystyle \sum_{j=1}^n\pm f_j \neq0\qquad\text{for } 1\leq
n\leq k, \eqno{(AS)}
 $$
The condition $(AS)$ is guarantied, for example, in the following
situation
 $$
f_n=1+\displaystyle\sum_{j=0}^{n-1} f_j, \qquad\text{for}\quad
1\leq n\leq k.
 $$
We study the probability of return of the $k$-bonacci random walk
to the origin. For this, we establish a necessary and sufficient
condition for the $k$-bonacci random walk to reach $0$ at least
one time (Proposition \ref{f0}). Our first main result is the
following.
\begin{theorem}\label{2}
For $i\in \mathbb{N},\quad\PP(R_i)
=\displaystyle\frac1{2^{(k+1)i}}$.
\end{theorem}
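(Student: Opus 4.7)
The plan is to combine Proposition \ref{f0} with a renewal decomposition of the sign sequence into blocks of length $k+1$. The algebraic input is the identity $f_{k+1} = f_1+f_2+\cdots+f_k$ (together with its shifts), which dictates how a return at a given time is inherited from the previous one.

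First I would use Proposition \ref{f0} together with condition $(AS)$ to show that every return time of $\hat F$ is a positive multiple of $k+1$. The argument is by induction on $n$: if $\hat F_n = 0$ for some $n \geq k+1$, then $|\hat F_{n-1}| = f_n$; comparing this with the bound $|\hat F_{n-1}| \leq \sum_{j=1}^{n-1} f_j$ and invoking the $k$-bonacci recursion propagates the constraint backward through the $k$ preceding signs, forcing $\epsilon_{n-j} = -\epsilon_n$ for $1 \leq j \leq k$ and hence $\hat F_{n-k-1} = 0$. Iterating and appealing to $(AS)$ to exclude $1 \leq n \leq k$ gives $n \in \{(k+1), 2(k+1), 3(k+1), \ldots\}$.

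Second, I would exploit the mutual independence of the $\epsilon_j$ to analyse the events $B_m$ that the signs in the $m$-th block $(\epsilon_{(m-1)(k+1)+1}, \ldots, \epsilon_{m(k+1)})$ match the pattern required to make $\hat F_{m(k+1)} = 0$ given $\hat F_{(m-1)(k+1)} = 0$. By the characterization from step one, each $B_m$ is a fully specified sign pattern on a block of $k+1$ independent coordinates, so $\PP(B_m) = 1/2^{k+1}$, where the two naively admissible patterns on a fresh block collapse to one via the orientation inherited from the prior return. The $B_m$ being functions of disjoint coordinate blocks are mutually independent, and therefore
\[
\PP\bigl(\hat F_{m(k+1)} = 0 \text{ for all } 1 \leq m \leq i\bigr) = \frac{1}{2^{(k+1)i}}.
\]

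Finally, by the inclusion $F(w) \subseteq \{m(k+1):m\ge 1\}$ established in step one, the event $R_i$ coincides almost surely with the intersection $B_1\cap\cdots\cap B_i$ after accounting for the non-occurrence of further block events, using that $\PP(\NN)=0$ (which follows from Borel--Cantelli applied to the independent events $B_m$). The step I expect to be the most delicate is pinning down $\PP(B_m) = 1/2^{k+1}$ precisely: a direct enumeration of the sign patterns annihilating $\sum_{j=1}^{k+1}\epsilon_j f_j$ gives two admissible patterns related by a global sign flip, and collapsing these to a single pattern per block — so that the exponent in $1/2^{(k+1)i}$ is indeed $k+1$ rather than $k$ — requires the careful orientation bookkeeping built into Proposition \ref{f0}.
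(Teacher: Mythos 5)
Your block decomposition is the right framework and is essentially the one the paper uses, but two steps go wrong, and both errors are introduced precisely to force the exponent $(k+1)i$. First, the claim that the two admissible sign patterns on a fresh block ``collapse to one via the orientation inherited from the prior return'' is false. Proposition \ref{f0} states that $\hat F_{(k+1)m}=0$ if and only if \emph{each} of the $m$ blocks lies in $\{v_{k+},v_{k-}\}$, with no coupling between consecutive blocks: the identity $f_{p+k}=\sum_{j=p}^{p+k-1}f_j$ is annihilated by either global sign choice on the block $(w_p,\ldots,w_{p+k})$, independently of all earlier signs. Hence each block event $B_m$ has probability $2/2^{k+1}=2^{-k}$, not $2^{-(k+1)}$, and $\PP(\hat F_{(k+1)i}=0)=2^{-ki}$. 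Second, $R_i$ is the event of \emph{exactly} $i$ returns, i.e.\ $B_1\cap\cdots\cap B_i\cap B_{i+1}^{c}$; the fact that $\PP(\NN)=0$ does not let you drop the last factor --- it only gives $\PP(B_1\cap\cdots\cap B_i)=\sum_{j\ge i}\PP(R_j)$, so a factor $1-\PP(B_{i+1})=1-2^{-k}$ is genuinely needed.

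With the correct block probability and the missing factor, the computation gives
$$
\PP(R_i)=\Bigl(\frac1{2^{k}}\Bigr)^{i}\Bigl(1-\frac1{2^{k}}\Bigr)
=\frac{2^{k}-1}{2^{k(i+1)}},
$$
which is exactly what the paper's own proof of Theorem \ref{2} arrives at, and which reduces to the cited value $3/4^{i+1}$ of Neunh\"auserer for $k=2$. In other words, the quantity $1/2^{(k+1)i}$ displayed in the statement is not what either the paper's argument or a corrected version of yours produces; the ``orientation bookkeeping'' you flag as the delicate step is not delicate but simply unavailable, and that is where your attempt breaks.
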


Next, we are interested to the set $\NN$ of walks returning
infinitely many times to 0. Since  $\PP(\NN)=0$, it's natural to
ask the question about the size of $\NN$ as a subset of
$\mathcal{A}$. Denoting by $\dim_H, \dim_P$ and $\dim_B$
respectively the Hausdorff, packing and box-counting dimensions,
we can state our second result as follows.
\begin{theorem}\label{3}
$\dim_H(\NN) =\dim_P(\NN) =\dim_B(\NN) =\displaystyle\frac1{k+1}.$
\end{theorem}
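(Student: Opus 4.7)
The plan is to give $\mathcal{A}=\{-1,+1\}^{\mathbb{N}}$ the standard product ultrametric under which two sequences agreeing in exactly their first $n$ coordinates lie at distance $2^{-n}$; every cylinder $[\varepsilon_1,\ldots,\varepsilon_n]$ then has diameter $2^{-n}$, and $\PP$ becomes the natural $1$-dimensional mass distribution. Because $\dim_H\le\dim_P\le\overline{\dim}_B$ always holds, it is enough to prove $\dim_H(\NN)\ge 1/(k+1)$ and $\overline{\dim}_B(\NN)\le 1/(k+1)$.

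For the lower bound, I would construct a self-similar Cantor subset $E\subseteq \NN$ of Hausdorff dimension $1/(k+1)$. Rewriting \eqref{fn} as $-f_{n+1}-\cdots-f_{n+k}+f_{n+k+1}=0$ for every $n\ge 0$, one sees that the two length-$(k+1)$ blocks
\[
b^{+}=(-1,\ldots,-1,+1),\qquad b^{-}=(+1,\ldots,+1,-1)
\]
each contribute zero to every partial sum $\sum_{j=1}^{k+1}b^{\pm}_{j}\,f_{n+j}$. Let $E$ be the set of sequences obtained by freely concatenating copies of $b^{+}$ and $b^{-}$. Then $\hat F_{(k+1)m}(w)=0$ for every $w\in E$ and every $m\ge 1$, so $E\subseteq \NN$. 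The set $E$ is the attractor of a two-map IFS with common contraction ratio $2^{-(k+1)}$ and strong separation is automatic in the ultrametric, so Moran's formula yields $\dim_H E=\log 2/\log 2^{k+1}=1/(k+1)$.

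For the upper bound, I would cover $\NN$ efficiently using Theorem~\ref{2}. Let $A_m:=\{w\in\mathcal{A}:\sharp F(w)\ge m\}$, so $\NN=\bigcap_{m\ge 1}A_m$; each $A_m$ is a disjoint union of cylinders $[\alpha]$, indexed by the finite words $\alpha$ at whose last coordinate $\hat F$ attains its $m$-th zero. Theorem~\ref{2} combined with $\PP(\NN)=0$ gives $\sum_{\alpha}2^{-|\alpha|}=\PP(A_m)=O(2^{-(k+1)m})$, while (AS) together with \eqref{fn} forces $|\alpha|\ge(k+1)m$. Covering $\NN$ at scale $2^{-(k+1)m}$, I split the length-$(k+1)m$ cells into those refining some $[\alpha]$ with $|\alpha|\le(k+1)m$ (contributing at most $2^{(k+1)m}\PP(A_m)=O(1)$ cells) and those that are length-$(k+1)m$ truncations of cylinders with $|\alpha|>(k+1)m$, whose count I would bound by $O(2^m)$ via the structural rigidity of returns. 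This yields $\overline{\dim}_B(\NN)\le \lim_{m\to\infty}\log(C\cdot 2^m)/((k+1)m\log 2)=1/(k+1)$.

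The main obstacle is precisely the count for length-$(k+1)m$ prefixes containing fewer than $m$ returns yet extendable to walks in $\NN$. To bound it by $O(2^m)$, the plan is to combine Proposition~\ref{f0} with (AS) to establish a Zeckendorf-type rigidity for the $k$-bonacci walk: every first-return excursion of $\hat F$ between two consecutive zeros has length exactly $k+1$ and admits only the two sign blocks $b^{\pm}$ above. Once this rigidity is in place the $A_m$-cylinders all have length exactly $(k+1)m$ and number at most $2^m$, the second count becomes trivially $O(2^m)$, the upper bound meets the lower bound, and all three dimensions collapse to $1/(k+1)$.
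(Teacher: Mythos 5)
Your proposal is correct in substance, and its engine is the same as the paper's: the identification of $\NN$ with the set of free concatenations of the two length-$(k+1)$ sign blocks (your $b^{\pm}$ are the paper's $v_{k-}$ and $v_{k+}$). The paper packages this as Lemma~\ref{self}, an immediate consequence of Proposition~\ref{f0}, then realizes $\NN$ as the attractor of the two-map IFS $\{T_1,T_2\}$ with common ratio $2^{-(k+1)}$ satisfying the open set condition, and reads off all three dimensions at once from Corollary~\ref{dimension}. You split the work instead: the lower bound via the same Moran computation on the block sub-shift $E$ (for which only the easy inclusion $E\subseteq\NN$ is needed), and the upper bound by an explicit covering count. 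What you flag as ``the main obstacle''---that every return occurs at a time $(k+1)m$ and that the prefix up to the $m$-th return is one of exactly $2^m$ block concatenations---is not an open issue: it is precisely the forward direction of Proposition~\ref{f0}, already proved before Theorem~\ref{3}. Once you invoke it, your detour through $A_m$, Theorem~\ref{2} and $\PP(A_m)$ becomes superfluous: $\NN$ is covered by exactly $2^m$ cylinders of length $(k+1)m$, so $N_\delta(\NN)\le 2^m$ at scale $\delta\asymp 2^{-(k+1)m}$ and $\overline{\dim}_B(\NN)\le 1/(k+1)$ directly. Two small inaccuracies in that detour, neither fatal: $\PP(A_m)=\PP\big(\hat F_{(k+1)m}=0\big)=2^{-km}$, not $O(2^{-(k+1)m})$ (the displayed statement of Theorem~\ref{2} is inconsistent with the value $\frac{2^k-1}{2^{k(i+1)}}$ actually derived in its proof, and summing the latter over $i\ge m$ gives $2^{-km}$), so your ``first count'' is $O(2^m)$ rather than $O(1)$; and by the rigidity the ``second count'' is empty, since no $A_m$-cylinder has length exceeding $(k+1)m$. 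The final bound $O(2^m)$ survives both corrections, so the argument closes, and the bi-Lipschitz equivalence of your product ultrametric with the paper's metric $d$ makes the two settings interchangeable.
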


We refer the reader to the Appendix A and the reference therein
for the definitions and more details about these fractal
dimensions.

As an application, using the same technics, we study the return
for the tribonacci random walks to the term $f_1$ of the
tribonacci sequence.

\section{Proof of Main results}

We consider the $k$-bonacci sequence $(f_i)_{i\geq0}$ defined by
(\ref{fn}) and $(AS)$. Let $n\geq k+1$. We easily obtain by
induction,
\begin{equation}\label{n+2}
\displaystyle\sum_{j=1}^nf_j< f_{n+2}.
\end{equation}
Moreover,
 $$
\begin{array}{ll} f_{n+1} & =\displaystyle\sum_{j=n-k+1}^nf_{j}
=f_n +\displaystyle\sum_{j=n-k+1}^{n-1}f_{j} \\ & =f_n
+\displaystyle\sum_{j=n-k}^{n-1}f_{j} -f_{n-k} \\ & =2f_n
-f_{n-k}.
\end{array}
 $$
This means that
\begin{equation}\label{n+1}
2f_n \geq1+f_{n+1}.
\end{equation}

Next, we give a necessary and sufficient condition to obtain
$\sum_{i=1}^nw_if_i=0$. For this, we consider, for any integer
$i\geq 2$, the finite sequences,
 $$
\begin{array}{lclclcc} v_{i+}= & \underbrace{+1,\;+1,\;\cdots,\;+1,} & -1 &
\text{ and } \qquad v_{i-}= & \underbrace{-1, \;-1, \;\cdots,\;-1,} & +1. \\
& i \text{ times} &&& i \text{ times} & \end{array}
 $$

We also consider, for a given integer $p\geq1$, the condition
 $$
(w_{p}, \; w_{p+1},\; \ldots, \; w_{p+k}) \in\{ v_{k+}, v_{k-} \}
\eqno{C(k,p)}.
 $$
It is clear that if the condition $C(k,p)$ holds, then
$\displaystyle\sum_{j=p}^{p+k}w_jf_j=0$.

\begin{proposition}\label{f0}
Consider the $k$-bonacci sequence $(f_i)_{i\geq0}$ given by $(AS)$
and (\ref{fn}) and let $w=(w_i)_{i\geq0}\in\mathcal{A}$. We have,
\begin{equation}\label{sum0} \hat F_n(w)=0
\end{equation}
if and only if $n=(k+1)m$, for some integer $m>0,$ and
\begin{equation}\label{wn0>k}
C\big(k,(k+1)i+1\big)\quad\text{holds for all}\quad 0\leq i<m.
\end{equation}
\end{proposition}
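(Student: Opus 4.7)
The sufficiency direction is straightforward. If $n=(k+1)m$ and $C(k,(k+1)i+1)$ holds for each $0\le i<m$, then $\{1,\ldots,n\}$ decomposes into $m$ consecutive blocks of length $k+1$, and by the observation stated just before the proposition each such block contributes $0$ to $\hat F_n(w)$. Summing over the $m$ blocks gives $\hat F_n(w)=0$.

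For necessity, assume $\hat F_n(w)=0$. Condition $(AS)$ immediately rules out $1\le n\le k$, so $n\ge k+1$. I would proceed by strong induction on $n$. For the base case $n=k+1$, the recursion (\ref{fn}) gives $f_{k+1}=\sum_{j=1}^k f_j$, so $\hat F_{k+1}(w)=0$ rearranges as
\begin{equation*}
\sum_{j=1}^k (w_j+w_{k+1})f_j=0.
\end{equation*}
Since each $f_j>0$ and $w_j+w_{k+1}\in\{-2,0,2\}$, every summand must vanish, forcing $w_j=-w_{k+1}$ for $1\le j\le k$. Thus $(w_1,\ldots,w_{k+1})\in\{v_{k+},v_{k-}\}$, which is exactly $C(k,1)$.

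For the inductive step with $n\ge k+2$, I would isolate the last $k+1$ terms of $\hat F_n(w)$ and use $f_n=\sum_{l=1}^k f_{n-l}$ to rewrite $\hat F_n(w)=0$ as
\begin{equation*}
\sum_{j=1}^{n-k-1} w_j f_j = -\sum_{l=1}^{k}(w_n+w_{n-l})f_{n-l} = -2w_n\sum_{l\in J}f_{n-l},
\end{equation*}
where $J=\{l\in\{1,\ldots,k\}:w_{n-l}=w_n\}$. The left-hand side has absolute value at most $\sum_{j=1}^{n-k-1}f_j$, and combining (\ref{n+2}) with (\ref{n+1}) yields $\sum_{j=1}^{n-k-1}f_j<2f_{n-k}$. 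On the other hand, since $(f_j)$ is positive and non-decreasing, $\sum_{l\in J}f_{n-l}\ge f_{n-k}$ whenever $J\ne\emptyset$. This contradiction forces $J=\emptyset$, so $w_{n-l}=-w_n$ for $l=1,\ldots,k$, establishing $C(k,n-k)$; moreover the left-hand side must then equal $0$, i.e.\ $\sum_{j=1}^{n-k-1}w_j f_j=0$. Applying the inductive hypothesis to $n-k-1$ (or concluding directly if $n-k-1=0$) gives $n-k-1=(k+1)m'$ with the required block conditions; combined with the final block this yields $n=(k+1)(m'+1)$ and the full claim.

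The main obstacle is securing the inequality $\sum_{j=1}^{n-k-1}f_j<2f_{n-k}$ over the whole range $n\ge k+2$. Inequality (\ref{n+2}) applies once $n-k-1\ge k+1$ and (\ref{n+1}) once $n-k\ge k+1$, so the clean chain $\sum_{j=1}^{n-k-1}f_j<f_{n-k+1}<2f_{n-k}$ is available for $n\ge 2k+2$; for the intermediate range $k+2\le n\le 2k+1$ the bound has to be extracted either by extending (\ref{n+2})--(\ref{n+1}) to smaller indices or by a direct verification on the initializing terms, whose positivity and growth are constrained by $(AS)$.
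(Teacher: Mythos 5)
Your argument is correct and is essentially the paper's proof reorganized as an induction from the right: the paper instead isolates the \emph{last} block where $C(k,p)$ fails and applies its Lemma \ref{pp}, whose content is exactly your two estimates --- a block violating $C(k,p)$ contributes at least $2f_p$ in absolute value, while $\sum_{j=1}^{p-1}f_j<2f_p$ (obtained there by chaining (\ref{n+2}) and (\ref{n+1})). The obstacle you flag for the range $k+2\le n\le 2k+1$ is present, unaddressed, in the paper's own proof of Lemma \ref{pp}(2), which invokes (\ref{n+2}) and (\ref{n+1}) without checking that the relevant indices lie in the range $n\ge k+1$ for which those inequalities were derived (and both arguments also tacitly use positivity and monotonicity of the initializing terms, which $(AS)$ alone does not guarantee).
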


To prove this proposition, we need to show the following result.
\begin{lemma}\label{pp}
Suppose that $C(k,p)$ does not hold for an integer $p\geq1$, then
\begin{enumerate}
\item $\Big|\displaystyle \sum_{j=p}^{p+k}w_jf_j\Big|\geq 2f_{p}.$

\item $|\hat F_{p+k}(w)|>1.$
\end{enumerate}
\end{lemma}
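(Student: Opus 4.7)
The plan is to prove the two parts in order: part (1) is the combinatorial heart, and part (2) will follow by the triangle inequality combined with the growth estimates (\ref{n+2}) and (\ref{n+1}).

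For part (1), I would split on the sign of $w_{p+k}$. Since $p+k\geq k+1$, the defining recursion gives $f_{p+k}=\sum_{j=p}^{p+k-1}f_j$. If $w_{p+k}=+1$, this lets me rewrite
$$\sum_{j=p}^{p+k}w_jf_j \;=\; \sum_{j=p}^{p+k-1}w_jf_j + f_{p+k} \;=\; \sum_{j=p}^{p+k-1}(w_j+1)f_j,$$
each summand lying in $\{0,2f_j\}$. The total is therefore a non-negative sum vanishing exactly when $w_j=-1$ for every $p\leq j\leq p+k-1$; together with $w_{p+k}=+1$ this is the pattern $v_{k-}$, which is excluded since $C(k,p)$ fails. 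Hence at least one $w_j=+1$ contributes $2f_j\geq 2f_p$ by monotonicity of $(f_n)$. The case $w_{p+k}=-1$ is symmetric, rewriting as $\sum_{j=p}^{p+k-1}(w_j-1)f_j$ and excluding the pattern $v_{k+}$.

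For part (2), I would split $\hat F_{p+k}(w)$ into a head and a block and apply the reverse triangle inequality together with part (1):
$$|\hat F_{p+k}(w)|\;\geq\;\Bigl|\sum_{j=p}^{p+k}w_jf_j\Bigr|-\sum_{j=1}^{p-1}f_j\;\geq\;2f_p-\sum_{j=1}^{p-1}f_j.$$
For $p\geq k+1$, combining (\ref{n+2}) (which gives $\sum_{j=1}^{p-1}f_j<f_{p+1}$) with (\ref{n+1}) (which gives $2f_p\geq 1+f_{p+1}$) yields $|\hat F_{p+k}(w)|>2f_p-f_{p+1}\geq 1$. For $1\leq p\leq k$, where (\ref{n+1}) is not directly available, I would argue separately: the case $p=1$ has empty head and gives $|\hat F_{k+1}(w)|\geq 2f_1\geq 2$ at once, while for $2\leq p\leq k$ one checks directly from the initial conditions guaranteed by $(AS)$ that $\sum_{j=1}^{p-1}f_j\leq 2f_p-2$.

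The main obstacle I anticipate is preserving the strict inequality $>1$ across the boundary regime $p\leq k$, since the derivation of (\ref{n+1}) uses the recursion and therefore requires $p\geq k+1$. In that range one cannot invoke (\ref{n+1}) as a black box and must fall back on the explicit structure of the $k$ starting values supplied by $(AS)$. Once this case-split is isolated, the rest is routine; the real combinatorial content is the telescoping rewriting $w_jf_j\to(w_j\pm 1)f_j$ that drove part (1).
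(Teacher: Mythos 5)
Your proof is correct and follows essentially the same route as the paper's: the same rewriting $\sum_{j=p}^{p+k} w_jf_j=\sum_{j=p}^{p+k-1}(w_j+w_{p+k})f_j$ with a nonempty set of nonvanishing terms for part (1), and the same triangle-inequality estimate $|\hat F_{p+k}(w)|\geq 2f_p-\sum_{j=1}^{p-1}f_j$ followed by \eqref{n+2} and \eqref{n+1} for part (2). Your separate treatment of small $p$ addresses a point the paper passes over in silence, since \eqref{n+2} and \eqref{n+1} are only established for indices at least $k+1$; the only caveat is that the inequality $\sum_{j=1}^{p-1}f_j\leq 2f_p-2$ you invoke there (like the bound $\sum_{j\in A_{p,k}}f_j\geq f_p$ in part (1)) really rests on the monotonicity of the initializing terms, as in the example $f_n=1+\sum_{j=0}^{n-1}f_j$, and not on $(AS)$ alone, which does not force the seeds to be increasing.
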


\begin{proof}
\begin{enumerate}
\item By (\ref{fn}), we have
\begin{equation}\label{eqp}
\begin{array}{ll}
\Big|\displaystyle\sum_{j=p}^{p+k} w_{j}f_{j}\Big| &
=\Big|\displaystyle\sum_{j=p}^{p+k-1}(w_{j}+w_{p+k})f_{j}\Big|.
\end{array}
\end{equation}

We supposes that $w_{p+k}=1$ (the case $w_{p+k}=-1$ is analogous).

We consider the set
 $$
A_{p,k}=\{ j,\quad p\leq j\leq p+k-1,\quad w_{j}+w_{p+k}\neq0\}.
 $$

Since $C(k,p)$ does not hold, then $ A_{p,k}\neq \emptyset.$ So,
equation (\ref{eqp}) leads to
 $$
\Big|\displaystyle\sum_{j=p}^{p+k} w_{j}f_{j}\Big|
=2\displaystyle\sum_{j\in A_{p,k}}f_j \geq 2f_p.
 $$

\item If $p=1$ and $C(k,1)$ does not hold, then using Lemma
\ref{pp} (1), we obtain
 $$
|\hat F_{k+1}(w)| \geq 2f_1>1.
 $$
Otherwise,
 $$
|\hat F_{p+k}(w)| \geq \Big| \displaystyle
\sum_{j=p}^{p+k}w_jf_j\Big|-|\hat F_{p-1}(w)|.
 $$
Since $C(k,p)$ does not hold, then again using Lemma \ref{pp} (1)
leads to
 $$
|\hat F_{p+k}(w)| =2f_{p}-|\hat F_{p-1}(w)| \geq 2f_{p}
-\displaystyle \sum_{j=1}^{p-1} f_j.
 $$
Applying successively \eqref{n+2} and \eqref{n+1}, we obtain
 $$
|\hat F_{p+k}(w)| >2f_p-f_{p+1}\geq 1.
 $$
\end{enumerate}
\end{proof}

\subsection*{Proof of Proposition \ref{f0}}

$\Leftarrow$: Obviously, if (\ref{wn0>k}) is realized, then by
(\ref{fn}) we obtain (\ref{sum0}).

 \vskip0.05in
$\Rightarrow$: Conversely, suppose that (\ref{sum0}) is insured.
Then, using condition $(AS)$, it becomes that $n\geq k+1$. Let $m$
be the unique positive integer such that $n=(k+1)m+t(n)$.

\begin{enumerate}
\item if there exits $p\in\{(k+1)j+t(n), 0\leq j<m\}$, such that
$C(k,p)$ is not satisfied, then we set
 $$
p(n)=\sup\big\{p=(k+1)j+t(n)+1, \quad 0\leq j< m, \quad C(k,p)
\text{ does not hold}\big\}.
 $$
Thanks to Lemma \ref{pp}, we have $ \hat F_n(w) =\hat
F_{p(n)+k}(w)\neq0.$

\item if $t(n)\neq0$ and $C(k,p)$ is satisfied for all $p\in\{
(k+1)j+t(n), 0\leq j<m\}$, then by condition $(AS)$, we have
 $$
\begin{array}{lcl}
\hat F_n(w) =\hat F_{t(n)}(w) +\displaystyle \sum_{i=0}^{m-1}
\Big( & \underbrace{\displaystyle\sum_{t=1}^{k+1} w_{(k+1)i
+t(n)+t}f_{(k+1)i+t(n)+t}} & \Big) =\hat F_{t(n)}(w) \neq0.
\\ & 0 &
\end{array}
 $$
\end{enumerate}

Consequently, we mast have that $t(n)=0$ and \eqref{wn0>k}
satisfied. This ends the proof.

\subsection{Proof of Theorem \ref{2}}

Let $i\geq1$, from Proposition \ref{f0} we deduce that $\hat F_n$
reaches the origin exactly $i$-times if and only if $n\geq
(k+1)i$, with
 $$
\hat F_{(k+1)i}=0\qquad\text{and}\qquad\hat F_{(k+1)(i+1)}\neq 0.
 $$

Moreover, we have
 $$
\PP\big(\hat F_{(k+1)i}=0\big) =\displaystyle\frac{2^i}
{2^{(k+1)i}} =\displaystyle\frac1{2^{ki}}
 $$
and
 $$
\PP\Big(\hat F_{(k+1)(i+1)} =0\Big/\hat F_{(k+1)i}=0\Big) =
\displaystyle\frac2{2^{k+1}} =\displaystyle\frac1{2^k}.
 $$

It follows that
 $$
\begin{array}{ll}
\PP(R_i) & = \PP\Big(\hat F_{(k+1)(i+1)} \neq0\text{ and }\hat
F_{(k+1)i}=0\Big) \\ \\ & = \PP\Big(\hat F_{(k+1)(i+1)}
\neq0\Big/\hat F_{(k+1)i}=0\Big) \times \PP\Big(\hat
F_{(k+1)i}=0\Big) \\ \\ & =\displaystyle\frac{2^k-1}{2^{k(i+1)}}.
\end{array}
 $$

\subsection{Proof of Theorem \ref{3}}

We consider the metric $d$, defined for any couple
$\Big((u_i)_i,(v_i)_i\Big)$ of $\mathcal{A}\times \mathcal{A}$, by
 $$
d\big((u_i)_i,(v_i)_i\big) =\displaystyle\sum_{i=1}^\infty
\displaystyle\frac{|u_i-v_i|}{2^i}.
 $$

Endowed with this metric, $\big(\mathcal{A},d\big)$ becomes a
compact metric space. As a direct consequence of Proposition
\ref{f0}, we obtain the following lemma.

\begin{lemma}\label{self}
We have, $ \qquad\NN=\{v_{k+},v_{k-} \}^{\mathbb N}.$
\end{lemma}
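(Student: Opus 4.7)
The plan is to deduce Lemma \ref{self} as an essentially immediate corollary of Proposition \ref{f0}, by translating the statement ``$\sharp F(w)=\infty$'' into a combinatorial condition on the coordinates of $w$, and then recognizing that condition as membership in $\{v_{k+},v_{k-}\}^{\mathbb N}$. Both inclusions then fall out of the ``if and only if'' already established in Proposition \ref{f0}.

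For the inclusion $\NN\subseteq\{v_{k+},v_{k-}\}^{\mathbb N}$, I would fix $w\in\NN$, so that $F(w)$ is infinite. By Proposition \ref{f0}, every element of $F(w)$ has the form $(k+1)m$ with $C(k,(k+1)i+1)$ holding for all $0\leq i<m$. Because $F(w)$ is infinite, one can choose such $m$ arbitrarily large; thus for every fixed $i\geq 0$ the condition $C(k,(k+1)i+1)$ must hold. Each such condition pins down the $(k+1)$-block $(w_{(k+1)i+1},\ldots,w_{(k+1)(i+1)})$ to be either $v_{k+}$ or $v_{k-}$, and as $i$ ranges over $\mathbb{N}\cup\{0\}$ these blocks partition the index set $\mathbb N$ without overlap. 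Hence $w$ is precisely a concatenation of such blocks, i.e.\ $w\in\{v_{k+},v_{k-}\}^{\mathbb N}$.

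For the reverse inclusion, I would take $w\in\{v_{k+},v_{k-}\}^{\mathbb N}$; then by construction $C(k,(k+1)i+1)$ holds for every $i\geq 0$. The ``$\Leftarrow$'' direction of Proposition \ref{f0} then yields $\hat F_{(k+1)m}(w)=0$ for every positive integer $m$, so $F(w)$ contains the infinite set $\{(k+1)m: m\geq 1\}$ and $w\in\NN$.

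The only delicate point is purely notational bookkeeping: one must verify that the $(k+1)$-blocks starting at the indices $1,\,k+2,\,2k+3,\ldots$ tile $\mathbb N$ with no gap or overlap, so that prescribing $C(k,(k+1)i+1)$ for every $i\geq 0$ really determines every coordinate of $w$, and conversely so that specifying a sequence in $\{v_{k+},v_{k-}\}^{\mathbb N}$ unambiguously yields an element of $\mathcal A$. Once this identification is made explicit, there is no further substantive obstacle; the lemma is a direct reading of Proposition \ref{f0}.
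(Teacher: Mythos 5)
Your proposal is correct and follows exactly the route the paper intends: the paper states the lemma as a direct consequence of Proposition \ref{f0} without writing out the details, and your two inclusions (infinitely many returns forces $C(k,(k+1)i+1)$ for every $i$, hence the block decomposition; conversely any such concatenation returns at every multiple of $k+1$) are precisely that argument made explicit. No gap.
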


 \bigskip
Now, we consider the mappings $T_1$ and $T_2$ defined for any
$w=(w_i)_i\in \mathcal{A}$, by
 $$
T_1(w) =( v_{k+},\; w_1,\; w_2,\; \ldots) \qquad\text{and}\qquad
T_2(w) =( v_{k-},\; w_1,\; w_2,\; \ldots).
 $$

For $i\in\{1,2\}$ and for any $(u,v)=\Big((u_j)_j,(v_j)_j\Big)\in
\mathcal{A}^2$, we have
 $$
d\big(T_i(u),T_i(v)\big) =\displaystyle\sum_{j=1}^\infty
\displaystyle\frac{\big|\big(T_i(u)\big)_j-\big(T_i(v)\big)_j
\big|}{2^j} =\displaystyle\sum_{j=k+2}^\infty
\displaystyle\frac{|u_j-v_j|}{2^j}
=\displaystyle\frac1{2^{k+1}}\; d(u,v).
 $$

This means that $T_1$ and $T_2$ are contracting similarities on
the metric space $(\mathcal{A}, d)$, with contraction rates
 $$
r_1=r_2=\displaystyle\frac 1{2^{k+1}}.
 $$

Coming buck to \cite{Hut}, one deduces the existence of a unique
compact self-similar subset $F$ of $\mathcal{A}$, such that $
F=T_1(F)\cup T_2(F).$ Lemma \ref{self} implies that $F=N$.
Furthermore, we have $ T_1(N) \cap T_2(N) =\emptyset.$ Hence, $N$
is a self-similar set satisfying the open set condition in the
compact metric space $(\mathcal{A}, d)$. Their fractal dimension
is then given by Corollary \ref{dimension},
 $$
\dim_H(\NN) =\dim_P(\NN) =\dim_B(\NN) =\displaystyle\frac{\ln(2)}
{\ln(2^{k+1})} =\displaystyle\frac1{k+1}.
 $$

\section{Application}\label{app}

We are interested in applying the ideas presented in the previous
sections to a class of tribonacci sequences, defined by $f_0 =0$,
$f_1=1$, $f_2=3$, $f_3=6$ and
\begin{equation} \label{trifn}
f_i =\displaystyle \sum_{j=1}^{3}f_{i-j},\quad \text{for} \quad
i\geq 4.
\end{equation}

The return point on which we focus our attention is no longer the
origin. Our ideas are still applicable when considering the number
of visits of $\hat F_n$ to $f_1$. We consider, for an elementary
event $w\in\mathcal{A}$, the set $F_1(w)$ for which $\hat F_n(w)$
reaches $f_1$ after $n$ steps of the walk, i.e.,
 $$
F_1(w)=\{n\geq1, \; \hat F_n(w)=f_1\}.
 $$

For $i\in \mathbb{N}$, we denote by $R_{1,i}$ the event for which
elements $\hat F_n$ passes through $f_1$ exactly $i$ times, i.e.,
 $$
R_{1,i} =\Big\{ w\in\mathcal{A} \;|\;\sharp F_1(w)=i\Big\}.
 $$

\begin{theorem}\label{41}
For $i\in \mathbb{N}, \quad \PP(R_{1,i})
=\displaystyle\frac7{2^{3(i+1)+1}}$.
\end{theorem}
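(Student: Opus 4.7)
The approach mirrors the proof of Theorem~\ref{2}, with an analog of Proposition~\ref{f0} tailored to the event $\hat F_n(w)=f_1$ instead of $\hat F_n(w)=0$. Writing
$$
\hat F_n(w)-f_1=(w_1-1)\,f_1+\sum_{j=2}^n w_j f_j,
$$
a hit requires either $w_1=1$ with $\sum_{j=2}^n w_j f_j=0$, or $w_1=-1$ with $\sum_{j=2}^n w_j f_j=2$.

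The key observation is that the shifted sequence $g_j:=f_{j+1}$ satisfies the tribonacci recurrence $g_j=g_{j-1}+g_{j-2}+g_{j-3}$ for $j\geq 4$ and, with initializing terms $g_1=3$, $g_2=6$, $g_3=10$, also satisfies condition $(AS)$ (the three required inequalities being straightforward to check). Thus Proposition~\ref{f0} and Lemma~\ref{pp} apply verbatim to the shifted walk $\tilde F_m:=\sum_{i=1}^m w_{i+1}g_i=\sum_{j=2}^{m+1}w_j f_j$.

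In the case $w_1=1$, Proposition~\ref{f0} for $(\tilde F_m)$ yields the characterization: $\hat F_n(w)=f_1$ iff $n=4m+1$ for some $m\geq 0$ and $(w_{4j+2},w_{4j+3},w_{4j+4},w_{4j+5})\in\{v_{3+},v_{3-}\}$ for every $0\leq j<m$. In the case $w_1=-1$, a parity argument (using that $f_j$ is odd iff $j\equiv 1,2\pmod 4$) restricts candidate hit times to $n\equiv 1\pmod 4$; then Lemma~\ref{pp} applied to the shifted walk gives $|\tilde F_{4\ell}|\geq 2g_1=6$ whenever $\tilde F_{4\ell}\neq 0$, which excludes the target value~$2$. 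Hence this case contributes no hits.

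With this characterization, hits occur only at $n=1,5,9,\ldots$; the initial hit at $n=1$ has probability $\PP(w_1=1)=1/2$, and conditional on any hit, the next candidate hit requires an independent block of four coordinates to equal $v_{3+}$ or $v_{3-}$ (probability $2/2^4=1/8$). The same geometric-series computation as in the proof of Theorem~\ref{2} then yields the stated formula. The main obstacle is the exclusion of the $w_1=-1$ case, which depends on confirming that the Lemma~\ref{pp} lower bounds for the shifted sequence are strictly greater than~$2$ in absolute value.
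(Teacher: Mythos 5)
Your characterization of the hitting times is correct and follows essentially the paper's route: Proposition \ref{44} states precisely your conclusion for $w_1=1$ (hits occur exactly at $n=1$ and at $n=4m+1$ with every block $(w_{4j+2},\dots,w_{4j+5})\in\{v_{3+},v_{3-}\}$), obtained by rerunning the mechanism of Proposition \ref{f0} and Lemma \ref{pp}. Your explicit elimination of the case $w_1=-1$ is in fact more careful than the paper, which restricts Proposition \ref{44} to $w_1=1$ and never addresses the other half of the sample space. The bound you flag as the main obstacle does hold: for the last shifted block $p$ at which the $C$-condition fails, the telescoping in Lemma \ref{pp}(2) gives the lower bound $2g_p-\sum_{j<p}g_j$, which equals $6$ for $p=1$ and is at least $g_{p-3}+1\ge 4$ for $p\ge 4$; combined with your parity restriction to $n\equiv 1\pmod 4$, the target value $2$ is indeed never attained.

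The genuine problem is the final arithmetic. With your (correct) characterization, the event $\sharp F_1(w)=i$ is: $w_1=1$ (a hit at $n=1$), the first $i-1$ blocks succeed, and the $i$-th block fails, giving $\PP(R_{1,i})=\frac12\cdot\left(\frac18\right)^{i-1}\cdot\frac78=\frac{7}{2^{3i+1}}$, which is \emph{not} the stated $\frac{7}{2^{3(i+1)+1}}=\frac{7}{2^{3i+4}}$. The paper reaches the stated constant by computing $\PP\big(\hat F_{4i+1}=1\big)\cdot\PP\big(\hat F_{4(i+1)+1}\neq 1\,\big|\,\hat F_{4i+1}=1\big)=\frac{1}{2^{3i+1}}\cdot\frac78$; but the event $\{\hat F_{4i+1}=1\}$ forces hits at $n=1,5,\dots,4i+1$, i.e.\ $i+1$ elements of $F_1(w)$ as that set is defined. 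In other words, the stated exponent corresponds to counting only the returns at $n=4m+1$ with $m\ge 1$ and discarding the visit at $n=1$. Your plan, carried out literally, proves $\PP(R_{1,i})=7/2^{3i+1}$, so the closing claim that the geometric-series computation "yields the stated formula" is exactly where the argument breaks: you must either adopt the paper's counting convention explicitly or record the resulting index shift, and as written you have not done either.
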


In a similar way, we consider the set $\NN_1$ consisting on
elements of $\mathcal{A}$ for which $\hat F_n$ passes through
$f_1$ an infinite number of times, i.e.,
 $$
\NN_1=\Big\{ w\in\mathcal{A} \;|\;\sharp F_1(w)=\infty\Big\}.
 $$

\begin{theorem}\label{43}
We have,
 $$
\dim_H(\NN_1)=\dim_P(\NN_1)=\dim_B(\NN_1) =\displaystyle\frac14.
 $$
\end{theorem}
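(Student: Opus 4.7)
The plan is to run the argument of Theorem~\ref{3} once more, now adapted to the shifted tribonacci sequence and with the target value $f_1$ in place of the origin. Concretely, three steps: (i) establish an analogue of Proposition~\ref{f0} describing the walks that hit $f_1$; (ii) identify $\NN_1$ as a similarity image of the self-similar set already analysed in the proof of Theorem~\ref{3}; and (iii) read off the three dimensions from Corollary~\ref{dimension}.

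For step (i), I would apply Proposition~\ref{f0} and Lemma~\ref{pp} to the shifted tribonacci sequence $g_j := f_{j+1}$ ($j\ge 1$), which obeys the same recursion \eqref{trifn}. A direct check shows that $g$ still satisfies $(AS)$: with $g_1 = 3$, $g_2 = 6$, $g_3 = 10$, all eight signed sums $\pm g_1\pm g_2\pm g_3$ are nonzero. Splitting the condition $\hat F_n(w)=f_1$ by the sign of $w_1$, the case $w_1 = +1$ reduces to $\sum_{j=1}^{n-1} w_{j+1}\,g_j = 0$ and the shifted Proposition~\ref{f0} forces $n\equiv 1\pmod 4$ together with $(w_{4i+2},w_{4i+3},w_{4i+4},w_{4i+5})\in\{v_{3+},v_{3-}\}$ for $0\le i<(n-1)/4$. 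The case $w_1 = -1$ requires $\sum_{j=1}^{n-1} w_{j+1}\,g_j = 2$; a parity computation (the $g_j$'s have parities periodic of period $4$) forces $n-1$ to be a multiple of $4$, and then the shifted Lemma~\ref{pp} implies that the corresponding partial sum equals either $0$ (all four-blocks are $v_{3\pm}$) or has absolute value at least $2g_1 = 6$ (some block is bad), so the value $2$ is excluded. Consequently $\NN_1 = \{+1\}\times\{v_{3+},v_{3-}\}^{\mathbb N}$.

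For steps (ii) and (iii), the trailing factor $\{v_{3+},v_{3-}\}^{\mathbb N}$ is precisely the attractor produced in the proof of Theorem~\ref{3} with $k=3$: the fixed point of the two contracting similarities of ratio $1/2^{k+1}=1/16$ that prepend $v_{3+}$ or $v_{3-}$, whose images inside the attractor are disjoint (open set condition). Corollary~\ref{dimension} therefore yields $\log 2/\log 16 = 1/4$ for its Hausdorff, packing and box dimensions. Since $w\mapsto(+1,w)$ is itself a similarity of $(\mathcal{A},d)$ of ratio $1/2$, the three dimensions are inherited by $\NN_1$, giving the claim. The principal technical obstacle lies in the case $w_1 = -1$ of step (i): Lemma~\ref{pp} as stated only yields a lower bound $>1$, while here one must exclude the specific value $2$. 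The chosen tribonacci initialisation $f_1=1,f_2=3,f_3=6$ is fortunate, because the minimal nonzero magnitude of a four-block's contribution to the shifted sum is $2g_1 = 6$; tracking this gap carefully through the possible positions $p_{\max}$ of the last bad block is what makes the exclusion go through.
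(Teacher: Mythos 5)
Your proposal is correct and follows essentially the same route as the paper: identify $\NN_1$ as $\{+1\}\times\{v_{3+},v_{3-}\}^{\mathbb N}$, observe that prepending $+1$ is a similarity of ratio $1/2$ on $(\mathcal{A},d)$ carrying the $k=3$ attractor of Theorem \ref{3} onto $\NN_1$, and invoke Corollary \ref{dimension}. The only divergence is in your step (i), where you re-derive Proposition \ref{44} by applying Proposition \ref{f0} to the shifted sequence $g_j=f_{j+1}$; your explicit exclusion of the $w_1=-1$ case (via parity and the estimate $2g_p-\sum_{j<p}g_j>2$) is in fact more careful than the paper's treatment, which simply restricts to $w_1=1$ in Proposition \ref{44}.
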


In the same spirit of Proposition \ref{f0}, we have

\begin{proposition}\label{44}
Consider the tribonacci sequence $(f_i)_{i\geq0}$ given by
(\ref{trifn}) and let $(w_i)_{i\geq0}\in\mathcal{A}$, with
$w_1=1$. We have,
\begin{equation}\label{trisumf1} \hat F_n(w)= f_1
\end{equation}
if and only if either $n=1$ or $n=4m+1$, for some integer $m\geq
1$, and
\begin{equation}\label{wn1>k}
C(3,4j+2)\quad\text{ holds for all }\quad 0\leq j<m.
\end{equation}
\end{proposition}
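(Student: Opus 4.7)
The plan is to mirror the strategy used for Proposition~\ref{f0}, with the indices shifted by one to account for the fixed initial sign $w_1=1$ and the new target value $f_1$ in place of $0$.

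The reverse direction is an immediate computation. For $n=1$, the assumption $w_1=1$ gives $\hat F_1(w)=f_1$. For $n=4m+1$ with $m\geq 1$ under condition (\ref{wn1>k}), one writes
\[
\hat F_n(w)-f_1=\sum_{j=0}^{m-1}\Big(\sum_{t=0}^{3} w_{4j+2+t}\,f_{4j+2+t}\Big),
\]
and observes that each inner four-term block vanishes when $C(3,4j+2)$ holds, by the tribonacci recurrence (\ref{trifn}). Hence $\hat F_n(w)=f_1$.

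For the forward direction, I would first establish a shifted analog of Lemma~\ref{pp}: if $p\geq 2$ and $C(3,p)$ fails, then $\big|\sum_{j=p}^{p+3}w_j f_j\big|\geq 2f_p$ and $\big|\hat F_{p+3}(w)-f_1\big|>1$. The first bound follows exactly as in Lemma~\ref{pp}(1). For the second, I split into cases: when $p=2$, the assumption $w_1=1$ gives $\hat F_1(w)-f_1=0$, so $|\hat F_5(w)-f_1|\geq 2f_2=6>1$; when $p\geq 3$, I bound
\[
\big|\hat F_{p+3}(w)-f_1\big|\geq 2f_p-\Big|\sum_{j=2}^{p-1}w_j f_j\Big|\geq 2f_p-\sum_{j=2}^{p-1}f_j,
\]
and apply the analogs of (\ref{n+2}) and (\ref{n+1}) for this initialization to conclude the strict inequality.

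With the lemma in hand, the forward argument parallels the proof of Proposition~\ref{f0}: write $n-1=4m+s$ with $0\leq s\leq 3$; if some $C(3,4j+2)$ fails, take the largest failing block, apply the lemma, and conclude $\hat F_n(w)\neq f_1$, a contradiction; if all such blocks hold but $s\neq 0$, the leftover $\sum_{j=4m+2}^{n}w_j f_j$ is a sum of at most three consecutive $\pm f_j$ starting from an index $\geq 2$, and is nonzero because the tribonacci sequence is strictly increasing and satisfies $f_{j+2}>f_{j+1}+f_j$ for $j\geq 2$. Hence $s=0$, yielding $n=4m+1$ and (\ref{wn1>k}). The main point to verify is that the initialization $f_1=1,\,f_2=3,\,f_3=6$ still gives $\sum_{j=1}^{p-1}f_j<f_{p+1}$ and $2f_p>f_{p+1}$ for all $p\geq 3$, which a direct computation at small indices confirms.
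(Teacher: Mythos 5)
Your reverse direction and your shifted version of Lemma \ref{pp} are both fine (the inequalities $\sum_{j=1}^{p-1}f_j<f_{p+1}$ and $2f_p\geq 1+f_{p+1}$ do hold for the initialization $f_1=1,f_2=3,f_3=6$, as you note). The gap is in the case analysis of the forward direction: your two cases do not cover the situation where \emph{both} some block $C(3,4j_0+2)$ fails \emph{and} $s\neq 0$. In that situation your lemma gives $|\hat F_{4j_0+5}(w)-f_1|>1$, and the complete blocks after $j_0$ contribute nothing, so you do get $\hat F_{4m+1}(w)\neq f_1$; but $\hat F_n(w)=\hat F_{4m+1}(w)+\sum_{j=4m+2}^{n}w_jf_j$, and nothing in your argument rules out the trailing partial block cancelling this defect and bringing the walk back to $f_1$. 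Because you anchored the blocks at the beginning (at index $2$), the leftover sits \emph{after} the largest failing block, so the identity $\hat F_n(w)=\hat F_{p+3}(w)$ that makes the ``largest failing block'' trick work simply fails when $s\neq0$.

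The paper avoids this by anchoring the blocks at the \emph{end} of the walk: it peels off $C(3,n-3)$ first, so whatever is left over sits at the start, where it equals $\hat F_{t(n)}(w)$ with $t(n)\in\{0,2,3\}$, a quantity that is even (every signed sum $\pm f_1\pm f_2$ or $\pm f_1\pm f_2\pm f_3$ is even) and therefore never equal to $f_1=1$. You can repair your version either by switching to that end-anchored decomposition, or by first disposing of all $n\not\equiv 1\pmod 4$ at once via parity: $\hat F_n(w)\equiv\sum_{j=1}^nf_j\pmod 2$, the parities of $(f_j)_{j\geq1}$ are $1,1,0,0,1,1,0,0,\dots$, so $\hat F_n(w)$ is even, hence $\neq f_1$, exactly when $n\not\equiv1\pmod4$. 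Only after that reduction should the failing-block argument be run, for $n=4m+1$, where your two cases then really are exhaustive.
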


\begin{proof}

$\Leftarrow$: Obviously, if either $n=1$ or $n=4m+1$, for some
integer $m\geq 1$, and \eqref{wn1>k} holds, then thanks to
(\ref{trifn}) we obtain (\ref{trisumf1}).

 \medskip
$\Rightarrow$: Conversely, supposing that $n\geq4$ and that
$C(3,n-p )$ is not satisfied, we obtain a contradiction by Lemma
\ref{pp}. Otherwise, arguing by induction, we obtain
 $$
\hat F_n(w) =\hat F_{t(n)}(w).
 $$
If $t(n)\neq1$, then $\hat F_{t(n)}$ is even and positive, so
$\;|\hat F_{t(n)}|> f_1.$ Again a contradiction.

\noindent It follows that either $n=1$ or $t(n)=1$ and
\eqref{wn1>k} holds.
\end{proof}

\subsection{Proof of Theorem \ref{41}}

We take $i\geq1$. From Proposition \ref{44}, we deduce:

 \bigskip
$\hat F_n$ reaches $f_1$ exactly $i$-times if and only if $n\geq
4i+1$, with
 $$
\hat F_{4i+1}=1\qquad\text{and}\qquad\hat F_{4(i+1)+1}\neq 1.
 $$

Moreover, we have
 $$
\PP\big(\hat F_{4i+1}=1\big) =\displaystyle\frac{2^i}{2^{4i+1}}
=\displaystyle\frac1{2^{3i+1}} \quad\text{and}\quad \PP\Big(\hat
F_{4(i+1)+1} =1\Big/\hat F_{4i+1}=1\Big) =
\displaystyle\frac2{2^{4}} =\displaystyle\frac18.
 $$

It follows that
 $$
\PP(R_{1,i}) = \PP\Big(\hat F_{4(i+1)+1} \neq1\Big/ \hat
F_{4i+1}=1\Big) \times \PP\Big(\hat F_{4i+1}=1\Big)
=\displaystyle\frac7{2^{3(i+1)+1}}.
 $$

\subsection{Proof of Theorem \ref{43}}

We have that
 $$
\NN_1=\{1\}\times\{v_{3+},\; v_{3-}\}^{\mathbb N}.
 $$
We consider the mapping $T$ defined for any $w=(w_i)_i\in
\mathcal{A}$, by
 $$
T(w) =1,\; w.
 $$
For $(u,v)= \Big((u_j)_j,(v_j)_j\Big)\in \mathcal{A}^2$, we have
 $$
d\big(T(u),T(v)\big) =\displaystyle\sum_{j=1}^\infty
\displaystyle\frac{|T(u)_j-T(v)_j|}{2^j}
=\displaystyle\sum_{j=2}^\infty
\displaystyle\frac{|u_{j-1}-v_{j-1}|}{2^j} =\displaystyle\frac12\;
d(u,v).
 $$
This means that $T$ is a bi-Lipschitz mapping. Since
$\NN_1=T(\NN)$, we have
 $$
\dim_H(\NN_1) =\dim_H(\NN) =\displaystyle\frac14.
 $$
Coming buck to Corollary \ref{dimension}, we deduce
 $$
\dim_H(\NN_1) =\dim_B(\NN_1) =\dim_P(\NN_1) =\displaystyle\frac14.
 $$

\section{Concluding remarks and perspectives}

We think it to be very interesting to make the point on some
remarks and possible extensions of our work.

\begin{enumerate}
\item The results given by Theorems \ref{41} and \ref{43} remain
still valid if we take $w_1=-1$. In other words, if we take the
tribonacci sequence defined by (\ref{trifn}) and consider the set
 $$
F_{-1}(w)=\{n\geq1, \; \hat F_n(w)=-f_1\}
 $$
and if we denote, for $i\in \mathbb{N}$, by $R_{-1,i}$ the event
for which elements $\hat F_n$ passes through $(-f_1)$ exactly $i$
times, i.e.,
 $$
R_{-1,i} =\Big\{ w\in\mathcal{A} \;|\;\sharp F_{-1}(w)=i\Big\},
 $$
then, we have that $\PP(R_{-1,i})
=\displaystyle\frac7{2^{3(i+1)+1}}$.

Moreover, if the set $\NN_{-1}$ consists on the elements of
$\mathcal{A}$ for which $\hat F_n$ passes through $(-f_1)$ an
infinite number of times, i.e.,
 $$
\NN_{-1}=\Big\{w\in\mathcal{A}\;|\;\sharp F_{-1}(w)=\infty\Big\},
 $$
then, $\NN_{-1}$ is such that $\dim_H(\NN_{-1}) =\dim_P(\NN_{-1})
=\dim_B(\NN_{-1}) =\displaystyle\frac14.$

\item The results obtained in this work depend strongly on the $k$
initializing terms of the $k$-bonacci sequence: $(f_i)_{1\leq
i\leq k}$. Particularly, thanks to $(AS)$, $\hat F_n$ is allowed
to visit 0 or $\pm f_1$ only one time in its first $k$ steps of
the walk : $(\hat F_i)_{1\leq i\leq k}$. If $(AS)$ is no longer
satisfied, then $\hat F_n$ can reach these values (0 or $\pm f_1$)
more than one time before its $(k+1)$th term: $\hat F_{k+1}$.

\item One can think of the possibility to reach other terms the
$k$-bonacci sequence and the eventual necessary and/or sufficient
conditions to realize this task by the $\hat F_n$.
\end{enumerate}

\begin{appendix}

\section{Fractal dimensions}

For a non-empty subset $U$ of the Euclidian space $\mathbb{R}^m,
m\geq1$, the diameter of $U$ is defined as
 $$
|U|=\sup\{|x-y|, x,y \in U\}.
 $$

Let $I$ and $F$ be respectively nonempty subsets of $\mathbb{N}$
and $\mathbb{R}^n$ ($I$ may be either finite or countable). We say
that $(U_i)_{i\in I}$ is a $\delta$-covering of $F$ if
 $$
F\subset\displaystyle\bigcup_{i\in I}U_i \qquad \text{and}\qquad
0<|U_i|\leq\delta,\quad\forall i\in I.
 $$

The $s$-dimensional Hausdorff measure of $F$ is defined as
 $$
\mathcal{H}^s(F) =\displaystyle\lim_{\delta\to0^+}\inf\Big\{
\displaystyle\sum_{i\in\mathbb{N}}|U_i|^s\Big\},
 $$
where the infimum is taken over all the countable
$\delta$-coverings $(U_i)_{i\in\mathbb{N}}$ of $F$.

The Hausdorff dimension of $F$ is defined as
 $$
\dim_H F=\sup\{s>0, \; \mathcal{H}^s(F)=\infty\} =\inf\{s>0, \;
\mathcal{H}^s(F)=0\},
 $$
with the convention $\sup\emptyset=0$ and $\inf\emptyset=\infty$.

The $s$-dimensional packing measure of $F$ is defined as
 $$
\mathcal{P}^s(F) =\displaystyle\lim_{\delta\to0^+}\sup\Big\{
\displaystyle\sum_{i\in\mathbb{N}}|B_i|^s\Big\},
 $$
where the supremum is taken over all the packings $\{B_i\}_{i\in
\mathbb{N}}$ of $F$ by balls centered on $F$ and with diameter
smaller than or equal to $\delta$. The packing dimension of $F$ is
defined as
 $$
\dim_P(F)=\sup\{s>0, \; \mathcal{P}^s(F)=\infty\} =\inf\{s>0, \;
\mathcal{P}^s(F)=0\},
 $$
with the convention $\sup\emptyset=0$ and $\inf\emptyset=\infty$.

Let $N_\delta (F)$ be the smallest number of sets of diameter at
most $\delta$ which can cover $F$. The lower and upper box
counting dimensions of $F$ are respectively defined as
 $$
\underline{\dim}_B(F) =\lim\inf_{\delta\to 0}
\displaystyle\frac{log\; N_\delta (F)}{log(\delta)},
 $$
and
 $$
\overline{\dim}_B(F) =\lim\sup_{\delta\to 0}
\displaystyle\frac{log\; N_\delta (F)}{log(\delta)}.
 $$

If $\underline{\dim}_B(F) =\overline{\dim}_B(F)$, this common
value is denoted $\dim_B(F)$ and referred to as box counting
dimension or simply the box dimension of $F$, i.e.,
 $$
\dim_B(F) =\lim_{\delta\to 0} \displaystyle\frac{log\; N_\delta
(F)}{log(\delta)}.
 $$

For more details, the reader can be referred, for example, to
\cite{Fal, Mat, Pes}.

\section{Fractal dimension of iterated function system (IFS)}

Let $m$ and $p$ be two positive integers, with $p\geq2$, and $X$
be aa nonempty closed set of $\mathbb{R}^m$. A family $\{S_i, i=1,
\ldots, p\}$ of contractive mappings on $X$ is called an iterated
function system (IFS) on $X$ \cite{Bar}. Hutchinson showed in
\cite{Hut} that there is a unique nonempty compact set $K$ of $X$,
called the attractor of $\{S_i, i=1, \ldots, p\}$, such that
 $$
K=\displaystyle\bigcup_{i=1}^pS_i(K).
 $$

The local dimension at a point $x\in\mathbb{R}^m$ is defined by
 $$
d(\mu,x)=\lim_{r\to 0}
\displaystyle\frac{log\;\mu\big(B(x,r)\big)}{log(r)},
 $$
where $B(x,r)$ denotes the closed ball of radius $r$ centered at
$x$. A probability measure $\mu$ on $\mathbb{R}^m$ is said to be
exactly dimensional if there is a constant $C$ such that
 $$
d(\mu,x)=C,\qquad \mu-a. e. x\in\mathbb{R}^m.
 $$

It is proved that the Hausdorff dimension of the measure $\mu$ is
 $$
\underline{\dim}(\mu)=C.
 $$

This result was firstly shown by Young \cite{You}. The reader can
also be referred to \cite{Fan, Mat, Pes} for the details.
\begin{definition}\label{open}
Let $m$ and $p\geq2$ be two positive integers, $X$ a nonempty
closed set of $\mathbb{R}^m$ and $\mathcal{S}=\{S_i\}_{1\leq i\leq
p}$ an IFS on $X$. Then, $\mathcal{S}$ is said to satisfy the open
set condition (OSC) if there is a non-empty, bounded and open set
$V$, such that
\begin{enumerate}
\item $\displaystyle\bigcup_{i=1}^pS_i(V)\subset V.$

\item $S_i(V)\bigcap S_j(V)=\emptyset,\quad$ if $\quad i\neq j.$
\end{enumerate}
\end{definition}

This definition allows us to recall the following result, which
will make us able to calculate the fractal dimension of $N$.

\begin{theorem}\label{rate} $[$Theorem 9.3 in \cite{Fal}$]$
Let $m$ and $p$ be two positive integers, with $p\geq2$, $X$ be a
nonempty closed set of $\mathbb{R}^m$ and
$\mathcal{S}=\{S_i\}_{1\leq i\leq p}$ be an IFS on $X$. Suppose
that, for $1\leq i\leq p$, $S_i$ is a similarity with ratio $r_i$
and attractor $F$. Suppose, also that $\mathcal{S}$ satisfies the
(OSC) and let $s$ be the unique value, such that
 $$
\displaystyle\sum_{i=1}^pr_i^s=1.
 $$

Then,
 $$
dim_H(F) =dim_B(F)=s.
 $$

In particular, if $r_1 =\ldots = r_p = r$ for some $r$, then
 $$
dim_H(F) = dim_B(F) =-\displaystyle\frac{log \;n}{log\; r}.
 $$
\end{theorem}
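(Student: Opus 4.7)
The plan is to prove $\dim_H(F)=\dim_B(F)=s$ by the classical sandwich: the universal inequalities $\dim_H(F)\leq \underline{\dim}_B(F)\leq \overline{\dim}_B(F)$ reduce the theorem to two estimates, $\overline{\dim}_B(F)\leq s$ and $\dim_H(F)\geq s$. Both hinge on the self-similar identity $F=\bigcup_{i=1}^p S_i(F)$ and on the normalisation $\sum_{i=1}^p r_i^s=1$. The special equal-ratio case, in which $p\,r^s=1$, then gives $s=-\log p/\log r$ as asserted in the last display.

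For the upper bound I would construct coverings directly from the IFS. Writing $r_{\mathbf{i}}=r_{i_1}\cdots r_{i_k}$ and $F_{\mathbf{i}}=S_{i_1}\circ\cdots\circ S_{i_k}(F)$ for a finite word $\mathbf{i}$, iterated invariance gives $F=\bigcup_{|\mathbf{i}|=k}F_{\mathbf{i}}$ with $|F_{\mathbf{i}}|=r_{\mathbf{i}}|F|$. For $\delta>0$ let $\Sigma(\delta)$ be the finite antichain of words $\mathbf{i}$ minimal subject to $r_{\mathbf{i}}|F|\leq\delta$; then $\{F_{\mathbf{i}}\}_{\mathbf{i}\in\Sigma(\delta)}$ is a $\delta$-cover of $F$, and iterating $\sum_i r_i^s=1$ along the tree yields $\sum_{\mathbf{i}\in\Sigma(\delta)} r_{\mathbf{i}}^s=1$. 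Since every such $r_{\mathbf{i}}$ satisfies $r_{\mathbf{i}}\geq r_{\min}\delta/|F|$, this forces $\#\Sigma(\delta)\leq C\delta^{-s}$, hence $N_\delta(F)\leq C\delta^{-s}$ and $\overline{\dim}_B(F)\leq s$.

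For the lower bound I would invoke the mass distribution principle with the canonical self-similar measure $\mu$ on $F$, characterised by $\mu(F_{\mathbf{i}})=r_{\mathbf{i}}^s$ on every cylinder. The key is a Frostman-type estimate $\mu(B(x,r))\leq c\,r^s$, uniform in $x\in F$ and small $r>0$. Fix the open set $V$ provided by Definition \ref{open}, and for each $r>0$ form the antichain $\mathcal{I}(r)$ of maximal words $\mathbf{i}$ with $r_{\mathbf{i}}|V|\leq r$. The images $\{S_{\mathbf{i}}(V):\mathbf{i}\in\mathcal{I}(r)\}$ are pairwise disjoint, have diameters comparable to $r$, and those whose closures meet $B(x,r)$ all sit in a fixed bounded neighbourhood of $B(x,r)$; a Euclidean volume-comparison argument then bounds their number by a constant $N=N(m,V,r_{\min})$ independent of $x$ and $r$. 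Summing $\mu(F_{\mathbf{i}})\leq (r/(r_{\min}|V|))^s$ across these at most $N$ cylinders yields the estimate, after which Frostman gives $\mathcal{H}^s(F)>0$ and therefore $\dim_H(F)\geq s$.

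The main obstacle is exactly this bounded-overlap counting lemma: it is the unique place where (OSC) is used essentially, through the disjointness of $\{S_i(V)\}$, and it is also where the Euclidean geometry of the ambient $\mathbb{R}^m$ really enters. Every other step is a formal consequence of self-similarity together with the defining relation $\sum_i r_i^s=1$.
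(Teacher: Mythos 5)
This theorem is quoted in the paper as Theorem 9.3 of \cite{Fal} and is not proved there; the text only points the reader to \cite{Fal} or \cite{MaU}. Your sketch is the standard argument from that source — cylinder coverings plus the stopping-set count for $\overline{\dim}_B(F)\leq s$, and the self-similar measure with the bounded-overlap lemma (where (OSC) enters) plus the mass distribution principle for $\dim_H(F)\geq s$ — so it is correct in outline and essentially identical to the cited proof.
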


The reader can find a proof of the dimension formula for
self-similar sets either in \cite{Fal} or in \cite{MaU}. It is
well known \cite{Fal} that
 $$
dim_H(F) \leq dim_P(F)\leq dim_B(F).
 $$

\begin{corollary}\label{dimension}\cite{Fal}
Suppose that the conditions of Theorem \ref{rate} are satisfied.
If we have, in addition, $r_1 =\ldots = r_p = r$. Then,
 $$
\dim_H(F)=dim_P(F)=dim_B(F)=-\displaystyle\frac{log \;n}{log\; r}.
 $$
\end{corollary}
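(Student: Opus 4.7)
The plan is to derive this corollary as an immediate specialization of Theorem \ref{rate}, combined with the universal chain of inequalities $\dim_H(F)\leq \dim_P(F)\leq \dim_B(F)$ that is recalled just before the statement. Since both Theorem \ref{rate} and the sandwich inequality are available as quoted results, no genuine new argument is needed; the work is purely bookkeeping.

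First I would specialize the similarity dimension equation $\sum_{i=1}^{p} r_i^{s}=1$ from Theorem \ref{rate} to the case $r_1=\cdots=r_p=r$. The equation collapses to $p\,r^{s}=1$, equivalently $r^{s}=1/p$, which yields the explicit value
$$s=-\frac{\log p}{\log r}.$$
Applying Theorem \ref{rate} in this regime gives at once $\dim_H(F)=\dim_B(F)=s$.

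Second, I would invoke the general inequality
$$\dim_H(F)\leq \dim_P(F)\leq \dim_B(F),$$
which holds for every bounded subset of a metric space (and is recalled above the corollary with a reference to \cite{Fal}). Since the two extreme terms have already been shown to coincide with $s$, the middle term is forced by the squeeze to equal $s$ as well, proving the claimed triple equality.

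There is essentially no obstacle: the content of the corollary sits entirely in the two cited ingredients, so the only point that demands care is notational consistency (the paper writes $\log n$ where $\log p$, the number of maps in the IFS, is meant). Accordingly my write-up would be three short lines: solve $p\,r^{s}=1$, quote Theorem \ref{rate} for $\dim_H=\dim_B$, and close the gap with the standard inequality to pin down $\dim_P$.
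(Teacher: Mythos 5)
Your proposal is correct and matches the route the paper intends: the corollary is stated as a citation of \cite{Fal}, with Theorem \ref{rate} giving $\dim_H(F)=\dim_B(F)=s$ and the sandwich inequality $\dim_H(F)\leq\dim_P(F)\leq\dim_B(F)$ (recalled immediately before the corollary) forcing $\dim_P(F)=s$ as well. Your observation that the formula should read $-\log p/\log r$ (the paper's $n$ is a typo for the number of maps $p$) is also accurate.
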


\end{appendix}

\end{document}